\renewcommand{\mod}{\operatorname{mod}\nolimits}
\newcommand{\Ann}{\operatorname{Ann}\nolimits}
\newcommand{\Ext}{\operatorname{Ext}\nolimits}
\newcommand{\MaxSpec}{\operatorname{MaxSpec}\nolimits}
\newcommand{\rad}{\operatorname{rad}\nolimits}
\newcommand{\gr}{\operatorname{gr}\nolimits}
\newcommand{\m}{\operatorname{\mathfrak{m}}\nolimits}
\newcommand{\az}{\operatorname{\mathfrak{a}}\nolimits}
\newcommand{\La}{\Lambda}
\newcommand{\Z}{\operatorname{Z}\nolimits}
\newcommand{\cx}{\operatorname{cx}\nolimits}
\newcommand{\px}{\operatorname{px}\nolimits}
\newcommand{\V}{\operatorname{V}\nolimits}
\newcommand{\End}{\operatorname{End}\nolimits}
\newtheorem{theorem}{Theorem}[section]
\newtheorem{corollary}[theorem]{Corollary}
\newtheorem{lemma}[theorem]{Lemma}
\newtheorem{proposition}[theorem]{Proposition}
\theoremstyle{definition}
\newtheorem*{definition}{Definition}
\theoremstyle{definition}
\theoremstyle{definition}
\theoremstyle{definition}
\newtheorem*{example}{Example}
\theoremstyle{definition}
\theoremstyle{definition}
\theoremstyle{remark}
\theoremstyle{definition}
\theoremstyle{definition}
\newtheorem*{assumption}{Assumption}
\begin{document}
\title{Relative support varieties}
\author{Petter Andreas Bergh \& {\O}yvind Solberg}
\address{Institutt for matematiske fag \\ NTNU \\ N-7491 Trondheim
\\ Norway}
\email{bergh@math.ntnu.no} \email{oyvinso@math.ntnu.no}

\subjclass[2000]{16E05, 16E30, 16G60}

\keywords{Relative support varieties, complexity, wild algebras}

\thanks{The first author was supported by NFR Storforsk grant no.\
167130}

\maketitle

\begin{abstract}
We define relative support varieties with respect to some fixed
module over a finite dimensional algebra. These varieties share many
of the standard properties of classical support varieties. Moreover,
when introducing finite generation conditions on cohomology, we show
that relative support varieties contain homological information on
the modules involved. As an application, we provide a new criterion
for a selfinjective algebra to be of wild representation type.
\end{abstract}

\section{Introduction}

Support varieties for modules over a given algebra are defined in
terms of the maximal ideal spectrum of some commutative graded ring
of cohomology operators, operators which act centrally on the
cohomology groups of the algebra. For group algebras of finite
groups, or, more generally, for finite dimensional cocommutative
Hopf algebras, this role is played by the cohomology ring of the
algebra (cf.\ \cite{Benson}, \cite{Carlson}, \cite{Evens},
\cite{Friedlander}). For commutative local complete intersections,
one uses the polynomial ring of Eisenbud operators (cf.\
\cite{Avramov1}, \cite{Avramov2}). In all these cases, the ring of
cohomology operators is Noetherian, and all the cohomology groups of
the algebra are finitely generated as modules. Consequently, the
theory of support varieties over these rings is very powerful, in
that the variety of a module contains a lot of homological
information on the module itself.

As shown in \cite{Snashall}, for a finite dimensional algebra, the
Hochschild cohomology ring, with its maximal ideal spectrum, is a
natural candidate as a ring of central cohomolgy operators. However,
in general this ring is not Noetherian, and the cohomology groups of
the algebra are not always finitely generated modules. But, as shown
in \cite{Erdmann}, when the Hochschild cohomology ring \emph{is}
Noetherian and all the cohomology groups are finitely genrated, then
one obtains a support variety theory very much like in the classical
cases. It is therefore important to establish which finite
dimensional algebras have ``nice" Hochschild cohomology rings. For
quantum complete intersections, this has been solved (cf.\
\cite{ErdmannSolberg}, \cite{BerghOppermann}).

In this paper, we define \emph{relative} support varieties with
respect to a fixed module. These are defined in terms of the maximal
ideal spectrum of some commutative graded subalgebra of the
$\Ext$-algebra of the module. As one would expect, these varieties
share many of the same properties of ``ordinary" support varieties,
such as the standard behavior on exact sequences etc. Moreover, when
we introduce finite generation conditions, then the relative support
varieties contain homological information on the modules involved,
just as in the classical case.

As an application, we provide a new criterion for a finite
dimensional selfinjective algebra to be of wild representation type.
Namely, we show that if there exists a module whose $\Ext$-algebra
is ``large" enough, Noetherian and finitely generated as a module
over its center, then the algebra is wild. This generalizes
Farnsteiner's theorem, which states that the complexity of every
module of a tame block of a finite group scheme is at most two (cf.\
\cite{Farnsteiner}).

\section{Relative support varieties}

Throughout this paper, we let $k$ be a field and $\La$ a finite
dimensional $k$-algebra. We denote by $\mod \La$ the category of
finitely generated left $\La$-modules, and we fix a module $M \in
\mod \La$ whose higher self-extensions do not all vanish. Whenever
we deal with $\La$-modules, we assume they belong to $\mod \La$.
Finally, for two $\La$-modules $X$ and $Y$, we denote by
$\Ext_{\La}^*(X,Y)$ the direct sum $\oplus_{i=0}^{\infty}
\Ext_{\La}^i(X,Y)$.

\sloppy Consider the $\Ext$-algebra $\Ext_{\La}^*(M,M)$ of $M$, in
which multiplication is given by the Yoneda product. Then for any
$\La$-module $N$, the graded $k$-vector space $\Ext_{\La}^*(M,N)$
is a graded right $\Ext_{\La}^*(M,M)$-module, whereas
$\Ext_{\La}^*(N,M)$ is a graded left $\Ext_{\La}^*(M,M)$-module.
Moreover, a $\La$-homomorphism $N_1 \xrightarrow{f} N_2$ induces
homomorphisms
\begin{eqnarray*}
\Ext_{\La}^*(M,N_1) & \xrightarrow{f_*} & \Ext_{\La}^*(M,N_2) \\
\Ext_{\La}^*(N_2,M) & \xrightarrow{f^*} & \Ext_{\La}^*(N_1,M)
\end{eqnarray*}
of right and left $\Ext_{\La}^*(M,M)$-modules. The homomorphism
$f_*$ is given as follows: given a homogeneous element
$$\eta \colon 0 \to N_1 \to X_n \to \cdots \to X_1 \to M \to 0$$
in $\Ext_{\La}^*(M,N_1)$, the element $f_* ( \eta )$ is the lower
exact sequence in the diagram
$$\xymatrix{
0 \ar[r] & N_1 \ar[d]^f \ar[r] & X_n \ar[d] \ar[r] & \cdots \ar[r] &
X_1 \ar[d] \ar[r] & M \ar[d] \ar[r] & 0 \\
0 \ar[r] & N_2 \ar[r] & K \ar[r] & \cdots \ar[r] & X_1 \ar[r] & M
\ar[r] & 0 }$$ in which the module $K$ is a pushout. Similarly,
the homomorphism $f^*$ is induced by pullback along $f$. It
follows immediately that $f_*$ and $f^*$ are well defined
homomorphisms of right and left $\Ext_{\La}^*(M,M)$-modules,
respectively.

The relative support varieties are defined with respect to some
commutative graded subalgebra of $\Ext_{\La}^*(M,M)$, and
therefore we now fix such a subalgebra.

\begin{assumption}
Fix a commutative graded subalgebra $H \subseteq
\Ext_{\La}^*(M,M)$ such that $H_0$ is a local ring.
\end{assumption}

As mentioned, the relative support varieties to be defined are
defined with respect to this graded subalgebra $H$. The assumption
that $H_0$ is local is made in order to get a nice
characterization of the trivial varieties. This assumption is not
very restrictive. For example, when we introduce finiteness
assumptions later, then we may actually take $H$ to be a
polynomial ring over $k$, so that $H_0$ is just $k$ itself.
Moreover, the following result shows that when $M$ is an
indecomposable module, then $H_0$ is automatically a local ring.

\begin{lemma}\label{local}
If $M$ is indecomposable, then $H_0$ is a local ring.
\end{lemma}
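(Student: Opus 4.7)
The plan is to exploit the fact that $H_0$ sits inside the finite-dimensional endomorphism ring $\End_{\La}(M)$, which is local by Fitting's lemma whenever $M$ is indecomposable.

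First I would identify $H_0$ as a commutative subalgebra of $\End_{\La}(M) = \Ext_{\La}^0(M,M)$, which is finite-dimensional over $k$ because $M \in \mod\La$. Hence $H_0$ itself is a finite-dimensional commutative $k$-algebra. Since $M$ is indecomposable, standard Krull--Schmidt / Fitting theory gives that $\End_{\La}(M)$ is a local $k$-algebra, so every one of its elements is either invertible or nilpotent.

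Next I would show that this dichotomy is inherited by $H_0$. Let $h \in H_0$. Viewed inside $\End_{\La}(M)$, either $h$ is nilpotent (in which case it is also nilpotent in $H_0$), or $h$ is a unit in $\End_{\La}(M)$. In the latter case, consider the $k$-linear map $H_0 \xrightarrow{\mu_h} H_0$ given by $x \mapsto hx$; this is well-defined because $H_0$ is a (commutative) subalgebra, and it is injective because $h$ is a unit in the ambient ring $\End_{\La}(M)$. Since $H_0$ is finite-dimensional, $\mu_h$ is surjective, so there exists $h' \in H_0$ with $hh' = 1$. Thus $h$ is actually a unit in $H_0$.

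Finally, in a commutative ring where every element is either a unit or nilpotent, the set of non-units coincides with the nilradical, which is an ideal; this is the unique maximal ideal, so the ring is local. Applying this to $H_0$ completes the proof. The only step that requires any real care is the inversion argument in the second paragraph, where finite-dimensionality of $H_0$ is essential, since in general a subring of a local ring need not be local.
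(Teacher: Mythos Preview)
Your proof is correct, but it takes a somewhat different route from the paper's. The paper argues via idempotents: since $H_0$ is a finite-dimensional commutative $k$-algebra, $H_0/\rad H_0$ is a product of fields $K_1\times\cdots\times K_t$; if $t\ge 2$ there are nontrivial idempotents, which lift to $H_0$ and hence to $\End_{\La}(M)$, contradicting indecomposability of $M$. You instead invoke Fitting's lemma to get the unit-or-nilpotent dichotomy in $\End_{\La}(M)$, and then use finite-dimensionality of $H_0$ to pull invertibility back into $H_0$. Both arguments ultimately rest on the same fact (indecomposable $\Rightarrow$ local endomorphism ring), but your packaging avoids idempotent lifting altogether, at the modest cost of the extra linear-algebra step showing that a unit of $\End_{\La}(M)$ lying in $H_0$ is already a unit of $H_0$. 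Either way the proof is short and self-contained; neither approach offers a decisive advantage over the other.
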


\begin{proof}
Since $H_0$ is a finite dimensional commutative $k$-algebra, the
factor algebra $H_0 / \rad H_0$ is a product $K_1 \times \cdots
\times K_t$ of fields. If $t \ge 2$, then this factor algebra
contains nontrivial idempotents, and these lift to $H_0$. But
$H_0$, being a subalgebra of $\End_{\La} (M)$, cannot contain any
nontrivial idempotent, hence $t=1$. Therefore the radical of $H_0$
is a maximal ideal.
\end{proof}

Since we have assumed that $H_0$ is a local ring, the graded ideal
$\rad H_0 \oplus H_1 \oplus \cdots$ is maximal in $H$ (and it is
the only maximal graded ideal). We denote this ideal by
$\m_{\gr}(H)$.

We now define a relative support variety theory for $\La$-modules,
in which the commutative graded ring $H$ is the coordinate ring.
Given a $\La$-module $N$, denote by $\Ann^i_H N$ the annihilator of
$\Ext_{\La}^*(M,N)$ in $H$, and by $\Ann^p_H N$ the annihilator of
$\Ext_{\La}^*(N,M)$. As the annihilator of any graded module over
any graded ring is graded, the ideals $\Ann^i_H N$ and $\Ann^p_H N$
are graded ideals of $H$. We define the injective and projective
\emph{support varieties of $N$ with respect to $H$} as
\begin{eqnarray*}
\V^i_H (N) & \stackrel{\text{def}}{=} & \{ \m \in \MaxSpec H
\mid \Ann^i_H N \subseteq \m \}, \\
\V^p_H (N) & \stackrel{\text{def}}{=} & \{ \m \in \MaxSpec H \mid
\Ann^p_H N \subseteq \m \},
\end{eqnarray*}
respectively, where $\MaxSpec H$ denotes the set of maximal ideals
of $H$. Note that $\Ann^i_H N$ and $\Ann^p_H N$ are contained in
$\m_{\gr}(H)$, hence $\m_{\gr}(H)$ is trivially a point in both
$\V^i_H (N)$ and $\V^p_H (N)$. We call a variety \emph{trivial} if
it only contains this point.

In the following result we record some elementary facts on relative
varieties. Whenever we write $\V_H^*(N)$ or $\Ann^*_H N$ and make a
statement, it is to be understood that the statement holds in both
the injective and projective cases. Furthermore, denote by
$M^{\perp}$ the category of all $\La$-modules $X$ such that
$\Ext_{\La}^n(M,X)=0$ for $n \gg 0$, and by $^{\perp}M$ the category
of all $\La$-modules $Y$ such that $\Ext_{\La}^n(Y,M)=0$ for $n \gg
0$.

\begin{proposition}\label{elementary}
For $\La$-modules $M,N,N_1,N_2,N_3$, the following hold:
\begin{itemize}
\item[(i)] $\V^*_H (M) = \MaxSpec H$. \item[(ii)] If
$N \in M^{\perp}$, then $\V^i_H (N)$ is trivial. In particular, this
holds if the injective dimension of $N$ is finite. \item[(iii)] If
$N \in {^{\perp}M}$, then $\V^p_H (N)$ is trivial. In particular,
this holds if the projective dimension of $N$ is finite.
\item[(iv)] For any exact sequence
$$0 \to N_1 \to N_2 \to N_3 \to 0,$$
the inclusion $\V^*_H (N_u) \subseteq \V^*_H (N_v) \cup \V^*_H
(N_w)$ holds whenever $\{ u,v,w \} = \{ 1,2,3 \}$. \item[(v)] If $N
= N_1 \oplus N_2$, then $\V^*_H (N) = \V^*_H (N_1) \cup \V^*_H
(N_2)$.
\end{itemize}
\end{proposition}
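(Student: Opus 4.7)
The proposition collects standard consequences of $H$-linearity of long exact $\Ext$-sequences and of Yoneda multiplication, together with the locality hypothesis on $H_0$. I would address the five items in the order (i), (ii)/(iii), (iv), (v), all of which reduce to routine bookkeeping.

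For (i), since $H$ is a subalgebra containing the identity $1_M \in \Ext_\La^0(M, M)$, any $h \in \Ann^*_H M$ satisfies $h = h \cdot 1_M = 0$, so $\Ann^*_H M = 0$ and $\V^*_H(M) = \MaxSpec H$. For (ii), suppose $\Ext_\La^n(M, N) = 0$ for all $n > m$. Every homogeneous element of $H$ of degree exceeding $m$ then raises degrees out of the support of $\Ext_\La^*(M, N)$, so $\Ann^i_H N \supseteq H_{\geq m+1}$. For $\m \in \V^i_H(N)$, primality of $\m$ upgrades this to the containment of every positive-degree element of $H$, since a power of each lies in $H_{\geq m+1}$. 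Locality of the finite-dimensional $k$-algebra $H_0$ then forces $\m \cap H_0 = \rad H_0$: the quotient $H_0 / (\m \cap H_0)$ is a subring of the field $H/\m$, hence a field, and a local Artinian $k$-algebra has a unique field quotient. Thus $\m = \m_{\gr}(H)$ and $\V^i_H(N)$ is trivial; finite injective dimension is a special case. Part (iii) is identical with $\Ext_\La^*(-, M)$ replacing $\Ext_\La^*(M, -)$.

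For (iv), apply $\Hom_\La(M, -)$ (resp.\ $\Hom_\La(-, M)$) to the short exact sequence to obtain a long exact sequence of graded $H$-modules whose maps, including the connecting homomorphism, are $H$-linear, since Yoneda multiplication commutes (up to sign) with connecting maps. A two-step diagram chase yields
$$\Ann^*_H(N_v) \cdot \Ann^*_H(N_w) \subseteq \Ann^*_H(N_u) \quad \text{whenever } \{u, v, w\} = \{1, 2, 3\}.$$
For instance, with $u = 2$: given $\xi \in \Ext_\La^*(M, N_2)$ and $h \in \Ann^i_H(N_3)$, the element $h\xi$ maps to zero in $\Ext_\La^*(M, N_3)$, so it lifts to some $\eta \in \Ext_\La^*(M, N_1)$; then $h' \eta = 0$ for any $h' \in \Ann^i_H(N_1)$, whence $h' h \xi = 0$. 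The cases $u = 1, 3$ are analogous by rotating the triangle. Since every $\m \in \MaxSpec H$ is prime, the containment above forces $\Ann^*_H(N_v) \subseteq \m$ or $\Ann^*_H(N_w) \subseteq \m$, giving the claimed inclusion.

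For (v), additivity of $\Ext$ in the appropriate argument gives $\Ext_\La^*(M, N_1 \oplus N_2) \cong \Ext_\La^*(M, N_1) \oplus \Ext_\La^*(M, N_2)$ as graded $H$-modules, hence $\Ann^*_H N = \Ann^*_H N_1 \cap \Ann^*_H N_2$. The inclusions $\V^*_H(N_j) \subseteq \V^*_H(N)$ are immediate; for the reverse, use $\Ann^*_H N_1 \cdot \Ann^*_H N_2 \subseteq \Ann^*_H N$ and primality again. The only mildly subtle step is the identification $\m = \m_{\gr}(H)$ in (ii)/(iii), where locality of $H_0$ is essential; without it several maximal ideals of $H$ could contain $H_{\geq 1}$, and the notion of ``trivial variety'' would no longer reduce to a single point. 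Everything else is a matter of combining $H$-linearity of the standard $\Ext$-sequences with primality of maximal ideals.
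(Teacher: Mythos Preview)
Your proof is correct and follows essentially the same route as the paper's own argument. The only cosmetic difference is in part~(ii): the paper observes directly that every homogeneous generator of $\m_{\gr}(H)$ (positive-degree elements, and nilpotent elements of $\rad H_0$) has a power in $\Ann^i_H N$, while you first note $H_{\ge m+1}\subseteq\Ann^i_H N$ and then use primality of $\m$ together with locality of the Artinian ring $H_0$ to force $\m=\m_{\gr}(H)$; these are two phrasings of the same computation.
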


\begin{proof}
Since $H$ is a subalgebra of $\Ext_{\La}^*(M,M)$, no nonzero element
of $H$ can annihilate $\Ext_{\La}^*(M,M)$. Therefore $\Ann^*_H M
=0$, and this shows (i).

To prove (ii), note that if $\Ext_{\La}^n(M,N)=0$ for $n \gg 0$ and
$\eta$ is a homogeneous element in $H$ of positive degree, then some
power of $\eta$ belongs to $\Ann^i_H N$. Moreover, if $\theta$ is
any element of $\rad H_0$, then it is nilpotent, and therefore some
power of $\theta$ also belongs to $\Ann^i_H N$. Consequently
$\V^i_H(N) = \{ \m_{\gr}(H) \}$. This proves (ii), and the proof of
(iii) is similar.

As for (iv), we prove only the inclusion $\V^i_H (N_3) \subseteq
\V^i_H (N_1) \cup \V^i_H (N_2)$; the other inclusions are proved
analogously. The given short exact sequence induces an exact
sequence
$$\Ext_{\La}^*(M,N_2) \to \Ext_{\La}^*(M,N_3) \to
\Ext_{\La}^{*+1}(M,N_1)$$ of right $\Ext_{\La}^*(M,M)$-modules, from
which we obtain $\Ann_H^i N_2 \cdot \Ann_H^i N_1 \subseteq \Ann_H^i
N_3$. The inclusion $\V^i_H (N_3) \subseteq \V^i_H (N_1) \cup \V^i_H
(N_2)$ now follows.

The proof of (v) is straightforward.
\end{proof}

By combining properties (ii), (iii) and (iv) in Proposition
\ref{elementary}, we see that injective varieties are invariant
under cosyzygies, whereas projective varieties are invariant under
syzygies. We record these facts in the following slightly more
general result, which concludes this section.

\begin{corollary}\label{invariant}
Let $N$ be a $\La$-module, and let
$$0 \to N_1 \to N_2 \to N_3 \to 0$$
be an exact sequence in $\mod \La$.
\begin{itemize}
\item[(i)] If $N_2 \in M^{\perp}$, then $\V_M^i(N_1) =
\V_M^i(N_3)$. In particular, the injective variety of $M$ equals
that of $\Omega_{\La}^{-1}(M)$. \item[(ii)] If $N_2 \in
{^{\perp}M}$, then $\V_M^p(N_1) = \V_M^p(N_3)$. In particular, the
projective variety of $M$ equals that of $\Omega_{\La}^1(M)$.
\end{itemize}
\end{corollary}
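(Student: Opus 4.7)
The plan is to derive both statements directly from Proposition \ref{elementary}, using (ii) and (iv) for part (i), and (iii) and (iv) for part (ii). The key observation is that parts (ii) and (iii) of the proposition identify precisely when a support variety degenerates to the single point $\m_{\gr}(H)$, and this point is automatically contained in every relative support variety. Hence, once one of the three modules in a short exact sequence has trivial variety, the two-out-of-three inclusions from (iv) collapse into an equality between the other two.

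For part (i), I would start from the hypothesis $N_2 \in M^{\perp}$ and invoke Proposition \ref{elementary}(ii) to conclude $\V^i_H(N_2) = \{\m_{\gr}(H)\}$. Then Proposition \ref{elementary}(iv), applied first with $(u,v,w) = (1,2,3)$ and then with $(u,v,w) = (3,1,2)$, yields
$$\V^i_H(N_1) \subseteq \V^i_H(N_2) \cup \V^i_H(N_3) = \V^i_H(N_3), \qquad \V^i_H(N_3) \subseteq \V^i_H(N_1) \cup \V^i_H(N_2) = \V^i_H(N_1),$$
since $\m_{\gr}(H)$ is in both $\V^i_H(N_1)$ and $\V^i_H(N_3)$ a priori. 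This gives the desired equality. For the ``in particular'' statement, I would apply this to the short exact sequence $0 \to M \to I \to \Omega_{\La}^{-1}(M) \to 0$ with $I$ an injective envelope of $M$; since $I$ is injective, its injective dimension is zero, so $I \in M^{\perp}$ and the hypothesis is met.

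Part (ii) is entirely parallel: I would use Proposition \ref{elementary}(iii) in place of (ii) to get $\V^p_H(N_2) = \{\m_{\gr}(H)\}$, apply (iv) twice in exactly the same way to conclude $\V^p_H(N_1) = \V^p_H(N_3)$, and then specialize to the short exact sequence $0 \to \Omega_{\La}^1(M) \to P \to M \to 0$ with $P$ a projective cover of $M$, noting that $P$ has projective dimension zero and hence lies in ${}^{\perp}M$.

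There is no real obstacle: both assertions are essentially formal consequences of the elementary properties already established, with the only mild subtlety being the observation that the trivial point $\m_{\gr}(H)$ can be freely absorbed into any support variety when applying the inclusion from (iv).
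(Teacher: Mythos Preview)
Your proposal is correct and matches the paper's approach exactly: the paper presents this corollary without a separate proof, noting just before the statement that it follows ``by combining properties (ii), (iii) and (iv) in Proposition~\ref{elementary}'', which is precisely what you have written out. Your observation that the trivial point $\m_{\gr}(H)$ is absorbed into any support variety is the only detail needed to pass from the inclusions in (iv) to equalities, and you have handled it correctly.
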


\section{Finite generation}

The reason why the theories of support varieties for group rings,
cocommutative Hopf algebras and complete intersections are all
very powerful, is the existence of a central commutative
Noetherian ring over which all the cohomology groups are finitely
generated (cf.\ \cite{Avramov1}, \cite{Avramov2}, \cite{Benson},
\cite{Carlson}, \cite{Evens}, \cite{Friedlander}). As shown in
\cite{Erdmann}, a similar theory is obtained for support varieties
defined in terms of the Hochschild cohomology ring, when one
\emph{assumes} the existence of such a commutative ring. Motivated
by this, we now make the following assumption on the fixed
subalgebra $H$ of $\Ext_{\La}^*(M,M)$.

\begin{assumption}
The ring $H$ is Noetherian.
\end{assumption}

A priori, the algebra $H$ is just some \emph{unknown} graded
subalgebra of $\Ext_{\La}^*(M,M)$, and this is of course not
satisfactory if we want to do real computations. However, the
following result shows that when $H$ is a subalgebra of the center
$\Z (M)$ of $\Ext_{\La}^*(M,M)$, and we require
$\Ext_{\La}^*(M,M)$ to be a finitely generated $H$-module, then we
may take $H$ to be $\Z (M)$ itself. Note that $\Z (M)$ is a graded
algebra. Indeed, suppose $\eta$ is an element of $\Z (M)$, and
write $\eta = \eta_0 + \cdots + \eta_n$, where $\eta_i$ is an
element of $\Ext_{\La}^i(M,M)$ for each $i$. Let $\theta$ be any
homogeneous element of $\Ext_{\La}^*(M,M)$. Then since $\eta
\theta = \theta \eta$, we see that each $\eta_i$ must commute with
$\theta$. Therefore each $\eta_i$ belongs to $\Z (M)$, and this
shows that $\Z (M)$ is a graded algebra.

\begin{proposition}\label{choice}
The following are equivalent.
\begin{itemize}
\item[(i)] There exists a commutative Noetherian graded subalgebra
$R \subseteq \Z (M)$ over which $\Ext_{\La}^*(M,M)$ is a finitely
generated module. \item[(ii)] The ring $\Z (M)$ is Noetherian, and
$\Ext_{\La}^*(M,M)$ is a finitely generated $\Z (M)$-module.
\item[(iii)] The ring $\Ext_{\La}^*(M,M)$ is Noetherian and a finitely
generated $\Z (M)$-module.
\end{itemize}
\end{proposition}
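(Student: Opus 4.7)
The plan is to route all implications through statement~(ii), which is the most manipulable of the three. The implication (ii)~$\Rightarrow$~(i) is immediate on taking $R=\Z(M)$, so the remaining work splits into two elementary submodule-chasing arguments, (i)~$\Rightarrow$~(ii) and (ii)~$\Rightarrow$~(iii), plus one non-trivial step, (iii)~$\Rightarrow$~(ii), where an Eakin--Nagata-type theorem is required. Write $E=\Ext_{\La}^{*}(M,M)$ throughout; the discussion preceding the proposition already verifies that $\Z(M)$ is a graded subalgebra of $E$, and in case~(i) we have a chain $R\subseteq \Z(M)\subseteq E$ of graded subrings.

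For (i)~$\Rightarrow$~(ii), I would observe that $E$ is a Noetherian $R$-module, being finitely generated over the Noetherian ring~$R$. Hence the $R$-submodule $\Z(M)$ of $E$ is itself finitely generated over $R$. Any ideal of $\Z(M)$ is in particular an $R$-submodule, and is thus finitely generated over $R$; since $R\subseteq \Z(M)$, it is then finitely generated as a $\Z(M)$-ideal. Therefore $\Z(M)$ is Noetherian, and $E$ is finitely generated over $\Z(M)$ simply because it is over the smaller subring $R$. The implication (ii)~$\Rightarrow$~(iii) is symmetric: $E$ is Noetherian as a $\Z(M)$-module, every left ideal of $E$ is a $\Z(M)$-submodule and hence finitely generated over $\Z(M)$, so the left ideal is also finitely generated as an $E$-ideal; the same argument on the right yields that $E$ is Noetherian as a ring.

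The main obstacle is (iii)~$\Rightarrow$~(ii), because a chain of $\Z(M)$-submodules of $E$ is in general strictly finer than a chain of left ideals of $E$, so Noetherianity of $E$ as a ring does not formally descend to $\Z(M)$ by submodule chasing alone. Here I would invoke the non-commutative form of the Eakin--Nagata theorem: if $B$ is a ring that is finitely generated as a module over a commutative central subring $A$, and $B$ is left (or right) Noetherian, then $A$ is Noetherian. Applying this with $A=\Z(M)$ and $B=E$ supplies the missing step and closes the cycle.
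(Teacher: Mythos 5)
Your proof is correct, and its skeleton matches the paper's: all easy implications are elementary submodule chasing, and the one genuinely non-formal step, descending Noetherianity from $\Ext_{\La}^*(M,M)$ to its center, is outsourced to a classical theorem. The differences are in the routing and in which theorem is invoked. The paper proves (ii) $\Rightarrow$ (i) trivially, proves (i) $\Rightarrow$ (iii) directly (exactly your (ii) $\Rightarrow$ (iii) argument, together with the remark that $\Ext_{\La}^*(M,M)$ is finitely generated over any intermediate ring $R \subseteq G \subseteq \Ext_{\La}^*(M,M)$, in particular over $\Z(M)$), and then handles (iii) $\Rightarrow$ (ii) by citing Theorem~1 of Artin--Tate; you instead close the cycle by citing the noncommutative Eakin--Nagata theorem (Formanek--Jategaonkar: a ring that is module-finite over a commutative central subring and is one-sided Noetherian has Noetherian central subring), which is an equally adequate black box for this step, applied with $A=\Z(M)$, $B=\Ext_{\La}^*(M,M)$. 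What your route buys is the direct elementary proof of (i) $\Rightarrow$ (ii): since $R\subseteq \Z(M)$, the ring $\Z(M)$ and all its ideals are $R$-submodules of the Noetherian $R$-module $\Ext_{\La}^*(M,M)$, so $\Z(M)$ is Noetherian without any external input; in the paper this implication is only obtained through the cited theorem, so your argument isolates (iii) $\Rightarrow$ (ii) as the sole place where a deep result is needed (and your remark explaining why naive chasing fails there is exactly right). Your proof also contains a small redundancy -- you establish (i) $\Rightarrow$ (ii) $\Rightarrow$ (iii) and (iii) $\Rightarrow$ (ii) $\Rightarrow$ (i), which is more than a minimal cycle -- but this is harmless.
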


\begin{proof}
The implication (ii) $\Rightarrow$ (i) is obvious. Suppose (i)
holds, and let $G$ be an algebra ``lying between" $R$ and
$\Ext_{\La}^*(M,M)$, i.e.\ $R \subseteq G \subseteq
\Ext_{\La}^*(M,M)$. Then $\Ext_{\La}^*(M,M)$ must be a finitely
generated $G$-module. Moreover, since $R$ is Noetherian and
$\Ext_{\La}^*(M,M)$ is a finitely generated $R$-module, we see that
$\Ext_{\La}^*(M,M)$ is a Noetherian ring. This shows the implication
(i) $\Rightarrow$ (iii). Finally, the implication (iii)
$\Rightarrow$ (ii) is \cite[Theorem 1]{Artin}.
\end{proof}

As mentioned in the previous section, the assumption that $H_0$ be
a local ring is superfluous once we have introduced finiteness
conditions. Namely, the following result shows that we may take
$H$ to be a polynomial ring over $k$, so that $H_0$ is just $k$
itself. Recall first that if $V$ is a graded $k$-vector space of
finite type (i.e.\ $\dim_k V_i < \infty$ for all $i$), then the
\emph{rate of growth} of $V$, denoted $\gamma (V)$, is defined as
$$\gamma (V) \stackrel{\text{def}}{=} \inf \{ t \in \mathbb{N} \cup \{
0 \} \mid \exists a \in \mathbb{R} \text{ such that } \dim_k V_n
\leq an^{t-1} \text{ for } n \gg 0 \}.$$

\begin{proposition}\label{normalization}
Let $N$ be a $\La$-module, and suppose $\Ext_{\La}^*(M,N)$
(respectively, $\Ext_{\La}^*(N,M)$) is a finitely generated
$H$-module. Then there exists a polynomial ring $k[x_1, \dots,
x_c] \subseteq H$, with $c = \gamma (H)$, such that $H$ and
$\Ext_{\La}^*(M,N)$ (respectively, $\Ext_{\La}^*(N,M)$) are
finitely generated $k[x_1, \dots, x_c]$-modules.
\end{proposition}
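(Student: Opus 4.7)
The plan is to carry out a graded Noether normalization on $H$ and then transfer finite generation downward. First, observe that $H$ is finitely generated as a $k$-algebra: being Noetherian graded, its irrelevant ideal $H_+$ is finitely generated by homogeneous elements, and adjoining $k$-algebra generators of the finite-dimensional local ring $H_0$ to these yields a finite generating set for $H$ over $k$.

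Next, reduce to the case of a field in degree zero by passing to $\bar{H} = H/(\rad H_0)H$, where $K = H_0/\rad H_0$ is a finite field extension of $k$. Apply the standard graded Noether normalization lemma to the finitely generated graded commutative $K$-algebra $\bar{H}$: there exist homogeneous algebraically independent elements $\bar{y}_1, \dots, \bar{y}_c \in \bar{H}_+$ over which $\bar{H}$ is a finite module, with $c = \Krulldim \bar{H}$. Lift these to homogeneous elements $y_1, \dots, y_c \in H_+$ of the same degrees. Because $\rad H_0$ is nilpotent in $H_0$, a standard iteration---lifts of module generators of $\bar{H}$ over $K[\bar{y}_1, \dots, \bar{y}_c]$ generate $H$ modulo $(\rad H_0)^t H$ for every $t$, and some power vanishes---shows that $H$ is a finitely generated module over $H_0[y_1, \dots, y_c]$, and hence over $k[y_1, \dots, y_c]$ since $H_0$ is finite-dimensional over $k$.

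Algebraic independence of $y_1, \dots, y_c$ over $k$ is then immediate: any polynomial relation with coefficients in $k$ reduces modulo $(\rad H_0)H$ to a relation among the $\bar{y}_i$ over $K$, forcing the coefficients to lie in $\rad H_0 \cap k = 0$. To identify $c$ with $\gamma(H)$, I would invoke the Hilbert--Serre theorem: since $H$ is a finitely generated $k[y_1, \dots, y_c]$-module, its Hilbert series has the shape $Q(t)/\prod_{i=1}^c(1-t^{\deg y_i})$, whence $\dim_k H_n = O(n^{c-1})$ and $\gamma(H) \leq c$; conversely $k[y_1, \dots, y_c] \subseteq H$ gives $\dim_k H_n \geq \dim_k k[y_1, \dots, y_c]_n \sim n^{c-1}$, so $\gamma(H) \geq c$. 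Finally, since $\Ext_{\La}^*(M,N)$ (and symmetrically $\Ext_{\La}^*(N,M)$) is finitely generated over $H$, which is finitely generated over $k[y_1, \dots, y_c]$, transitivity yields finite generation over $k[y_1, \dots, y_c]$.

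The main obstacle is ensuring that the normalization descends all the way to $k$ rather than merely to $K$ or $H_0$; the elementary observation $\rad H_0 \cap k = 0$ is the ingredient that makes algebraic independence over $k$ survive the reduction modulo $(\rad H_0)H$. A secondary delicate point is matching the number of variables exactly with $\gamma(H)$ rather than producing only an upper bound, which is why $c$ must be taken to be $\Krulldim \bar{H}$ and not some larger value.
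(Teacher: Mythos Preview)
Your proof is correct and follows the same approach as the paper, which simply reads ``Follows from the Noether normalization lemma.'' You have supplied the details the paper omits---in particular, the reduction from the local ring $H_0$ to its residue field and the identification $c=\gamma(H)$ via Hilbert--Serre---but the underlying strategy is identical.
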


\begin{proof}
Follows from the Noether normalization lemma.
\end{proof}

In the following result we characterize precisely when \emph{all}
the cohomology modules are finitely generated over $H$.

\begin{proposition}\label{finite}
Consider the following conditions.
\begin{itemize}
\item[(i)] For all $N \in \mod \La$, the $H$-module
$\Ext_{\La}^*(M,N)$ is finitely generated.
\item[(ii)] The $H$-module $\Ext_{\La}^*(M, \La / \rad \La )$ is
finitely generated.
\item[(iii)] For all $N \in \mod \La$, the $H$-module
$\Ext_{\La}^*(N,M)$ is finitely generated.
\item[(iv)] The $H$-module $\Ext_{\La}^*( \La / \rad \La, M )$ is
finitely generated.
\end{itemize}
Then the implications \emph{(i)} $\Leftrightarrow$ \emph{(ii)} and
\emph{(iii)} $\Leftrightarrow$ \emph{(iv)} hold.
\end{proposition}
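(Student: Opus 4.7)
The implications (i) $\Rightarrow$ (ii) and (iii) $\Rightarrow$ (iv) are immediate, since $\La/\rad \La$ is itself a module in $\mod \La$. So the content of the proposition lies in the converses (ii) $\Rightarrow$ (i) and (iv) $\Rightarrow$ (iii), and I would prove these two together since the arguments are dual.

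To show (ii) $\Rightarrow$ (i), I would first note that $\La/\rad \La$ decomposes as $S_1 \oplus \cdots \oplus S_r$ where $S_1, \dots, S_r$ is a complete list of the simple $\La$-modules. Since $\Ext_{\La}^*(M, \La/\rad \La) \cong \bigoplus_i \Ext_{\La}^*(M, S_i)$ as $H$-modules, assumption (ii) forces each $\Ext_{\La}^*(M, S_i)$ to be a finitely generated $H$-module (using that $H$ is Noetherian, so submodules/summands of finitely generated modules are finitely generated).

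The next step is induction on the composition length of $N$. For $N$ simple the claim is covered by the previous paragraph. For the induction step, pick a simple submodule $S \subseteq N$, giving a short exact sequence $0 \to S \to N \to N/S \to 0$ with $N/S$ of smaller length. The induced long exact sequence
\begin{equation*}
\cdots \to \Ext_{\La}^n(M,S) \to \Ext_{\La}^n(M,N) \to \Ext_{\La}^n(M,N/S) \to \Ext_{\La}^{n+1}(M,S) \to \cdots
\end{equation*}
is an exact sequence of $H$-modules, so $\Ext_{\La}^*(M,N)$ sits in a short exact sequence of $H$-modules whose outer terms are a quotient of $\Ext_{\La}^*(M,S)$ and a submodule of $\Ext_{\La}^*(M,N/S)$. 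Both outer terms are finitely generated over $H$ (by induction, the base case, and the Noetherian hypothesis), hence so is the middle term. This establishes (i).

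The implication (iv) $\Rightarrow$ (iii) is proved by the same strategy applied to the contravariant functor $\Ext_{\La}^*(-,M)$: decompose $\La/\rad \La$ to get finite generation of $\Ext_{\La}^*(S_i,M)$ for each simple $S_i$, then induct on the length of $N$ using the long exact sequence associated to $0 \to S \to N \to N/S \to 0$ in the first variable. I do not anticipate any real obstacle; the only thing one needs to be careful about is keeping track of the grading (so that quotients, submodules and extensions really are taken in the category of graded $H$-modules), and invoking the Noetherian assumption on $H$ at each application of ``submodule of finitely generated is finitely generated.''
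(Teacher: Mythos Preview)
Your argument is correct and matches the paper's proof almost exactly: both proceed by induction on the composition length of $N$, using the long exact sequence in $\Ext$ and the Noetherianity of $H$ to pass finite generation from the outer terms to the middle one. The only cosmetic difference is that the paper picks an arbitrary nonzero proper submodule $L \subseteq N$ (so that both $L$ and $N/L$ have strictly smaller length), whereas you pick a simple submodule; either choice works.
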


\begin{proof}
We prove only the implication (ii) $\Rightarrow$ (i); the
implication (iv) $\Rightarrow$ (iii) is proved analogously. The
proof is by induction on the length $\ell (N)$ of a module $N$.
Since the $H$-module $\Ext_{\La}^*(M, \La / \rad \La )$ is finitely
generated, so is $\Ext_{\La}^*(M,S)$ for any simple $\La$-module
$S$. Now suppose $\ell (N) > 1$, and choose a nonzero proper
submodule $L$ of $N$. The exact sequence
$$0 \to L \to N \to N/L \to 0$$
induces an exact sequence
$$\Ext_{\La}^*(M,L) \to \Ext_{\La}^*(M,N) \to \Ext_{\La}^*(M,N/L)$$
of $H$-modules. By assumption, both the end terms are finitely
generated $H$-modules, hence so is the middle term since $H$ is
Noetherian.
\end{proof}

There are situations when finiteness always occurs, regardless of
the module $M$ we start with. Namely, when all the cohomology groups
of the algebra are finitely generated over a central ring of
cohomology operators, as in the following definition.

\begin{definition}
The algebra $\La$ satisfies {\bf{Fg}} if there exists a commutative
Noetherian graded $k$-algebra $R = \bigoplus_{i=0}^{\infty} R_i$ of
finite type (i.e.\ $\dim_k R_i < \infty$ for all $i$) satisfying the
following:
\begin{enumerate}
\item[(i)] For every $X \in \mod \La$ there is a graded ring homomorphism
$$\phi_X \colon R \to \Ext_{\La}^*(X,X).$$
\item[(ii)] For each pair $(X,Y)$ of finitely generated
$\La$-modules, the scalar actions from $R$ on $\Ext_{\La}^*(X,Y)$
via $\phi_X$ and $\phi_Y$ coincide, and $\Ext_{\La}^*(X,Y)$ is a
finitely generated $R$-module.
\end{enumerate}
\end{definition}

As mentioned, this holds if $\La$ is the group algebra of a finite
group, a cocommutative Hopf algebra, a finite dimensional
commutative complete intersection, or if the Hochschild cohomology
ring of $\La$ is suitably ``nice" (cf.\ \cite{Avramov1},
\cite{Avramov2}, \cite{Benson}, \cite{Carlson}, \cite{Evens},
\cite{Friedlander}, \cite{Erdmann}, \cite{ErdmannSolberg},
\cite{BerghOppermann}). Now suppose $\La$ satisfies {\bf{Fg}} with
respect to a graded ring $R$ as in the definition, and let $X$ be a
$\La$-module. Then $\phi_X (R)$ is a commutative Noetherian graded
subalgebra of the center of $\Ext_{\La}^*(X,X)$. Moreover, for any
$Y \in \mod \La$ both $\Ext_{\La}^*(X,Y)$ and $\Ext_{\La}^*(Y,X)$
are finitely generated $\phi_X (R)$-modules.

When $\La$ satisfies {\bf{Fg}}, we may also define support varieties
with respect to the ring of cohomology operators. Namely, let $R$ be
as in the definition. Given $\La$-modules $X$ and $Y$, we define
$$\V_R (X,Y) \stackrel{\text{def}}{=} \{ \m \in \MaxSpec R \mid
\Ann_R \Ext_{\La}^*(X,Y) \subseteq \m \}.$$ Is this variety
comparable to $\V_{\phi_X (R)}^i(Y)$ and $\V_{\phi_Y (R)}^p(X)$? The
following result shows that the three varieties $\V_{\phi_X
(R)}^i(Y), \V_{\phi_Y (R)}^p(X)$ and $\V_R (X,Y)$ are in fact
isomorphic.

\begin{proposition}\label{isovar}
Suppose $\La$ satisfies {\bf{Fg}} with respect to a graded ring $R$
as in the definition above, and let $X$ and $Y$ be $\La$-modules.
Then the varieties $\V_{\phi_X (R)}^i(Y), \V_{\phi_Y (R)}^p(X)$ and
$\V_R (X,Y)$ are isomorphic.
\end{proposition}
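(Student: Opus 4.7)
The heart of the argument is the Fg hypothesis, which asserts that the two scalar actions of $R$ on $\Ext_\La^*(X,Y)$---one via $\phi_X$ (Yoneda multiplication on the right) and one via $\phi_Y$ (Yoneda multiplication on the left)---coincide. In particular, there is a single well-defined annihilator $\Ann_R \Ext_\La^*(X,Y)$ in $R$, and this is the ideal defining $\V_R(X,Y)$.

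My first step is to record the two formal identities
\[
\Ann_R \Ext_\La^*(X,Y) \;=\; \phi_X^{-1}\bigl(\Ann^i_{\phi_X(R)} Y\bigr) \;=\; \phi_Y^{-1}\bigl(\Ann^p_{\phi_Y(R)} X\bigr).
\]
Each equality is immediate from unwinding definitions: an element $r \in R$ annihilates $\Ext_\La^*(X,Y)$ through $\phi_X$ precisely when $\phi_X(r)$ lies in the annihilator of $\Ext_\La^*(X,Y)$ as a right $\phi_X(R)$-module, and the analogous statement holds for $\phi_Y$ and the left $\phi_Y(R)$-action. The coincidence of the two $R$-actions supplied by Fg then forces the two preimages on the right to agree.

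My second step is to translate these ideal identities into isomorphisms of varieties. Since $\ker \phi_X$ acts as zero, it is contained in $\Ann_R \Ext_\La^*(X,Y)$, and hence every maximal ideal of $R$ that contains this annihilator must contain $\ker \phi_X$. Such maximal ideals correspond bijectively---via the surjection $\phi_X \colon R \twoheadrightarrow \phi_X(R)$---to maximal ideals of $\phi_X(R)$ containing $\Ann^i_{\phi_X(R)} Y$. This bijection is precisely the restriction of the closed immersion $\MaxSpec \phi_X(R) \hookrightarrow \MaxSpec R$, so it yields an isomorphism $\V^i_{\phi_X(R)}(Y) \cong \V_R(X,Y)$. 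The analogous argument applied to $\phi_Y$ gives $\V^p_{\phi_Y(R)}(X) \cong \V_R(X,Y)$, and composing the two identifications completes the proof.

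I anticipate no substantive obstacle here beyond notational bookkeeping; everything reduces to standard facts about how $\MaxSpec$ behaves under ring surjections, combined with the bimodule structure of $\Ext_\La^*(X,Y)$ over $\Ext_\La^*(X,X)$ and $\Ext_\La^*(Y,Y)$. The one conceptually delicate point is invoking the two a priori distinct $R$-actions consistently, which is exactly what the Fg hypothesis is designed to enable.
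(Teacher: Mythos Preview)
Your proof is correct and follows essentially the same route as the paper's own argument: both reduce to the identity between $\Ann_R \Ext_{\La}^*(X,Y)$ and the pullback of $\Ann^i_{\phi_X(R)}Y$ along $\phi_X$, and then translate this into a bijection on maximal ideals via the surjection $R \twoheadrightarrow \phi_X(R)$. If anything, your version is slightly more explicit in recording that $\ker\phi_X \subseteq \Ann_R \Ext_{\La}^*(X,Y)$, which is what guarantees the correspondence $\m \mapsto \phi_X(\m)$ restricts to a bijection between the two varieties.
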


\begin{proof}
Let $\m$ be a maximal ideal in $R$. Since $\phi_X \left ( \Ann_R
\Ext_{\La}^*(X,Y) \right )$ equals $\Ann^i_{\phi_X (R)} Y$, we see
that $\Ann_R \Ext_{\La}^*(X,Y) \subseteq \m$ if and only if
$\Ann^i_{\phi_X (R)} Y \subseteq \phi_X ( \m )$. Therefore $\m$
belongs to $\V_R (X,Y)$ if and only if $\phi_X ( \m )$ belongs to
$\V_{\phi_X (R)}^i(Y)$, and this shows that the varieties $\V_R
(X,Y)$ and $\V_{\phi_X (R)}^i(Y)$ are isomorphic. Similarly the
varieties $\V_R (X,Y)$ and $\V_{\phi_Y (R)}^p(X)$ are isomorphic.
\end{proof}

As we saw above, when $\La$ satisfies {\bf{Fg}} then for
\emph{every} $\La$-module $M$ there exists a commutative Noetherian
graded subalgebra $H \subseteq \Ext_{\La}^*(M,M)$ over which
$\Ext_{\La}^*( \La / \rad \La, M )$ and $\Ext_{\La}^*(M, \La / \rad
\La )$ are finitely generated. However, the following example shows
that this may very well hold for a module even if the algebra does
not satisfy {\bf{Fg}}.

\begin{example}
Suppose $\La$ is selfinjective, and let $M$ be a nonzero periodic
$\La$-module, i.e.\ $\Omega_{\La}^p (M) \simeq M$ for some $p \ge
1$. Then the first part of the minimal projective resolution of $M$
is a $p$-fold extension
$$0 \to M \to P_{p-1} \to \cdots \to P_0 \to M \to 0.$$
Denote this extension by $\mu$, and consider the subalgebra $k [ \mu
]$ of $\Ext_{\La}^*(M,M)$. This subalgebra is a Noetherian ring over
which $\Ext_{\La}^*(M,M)$ is finitely generated as a module. In
fact, given any $\La$-module $N$, the $k [ \mu ]$-modules
$\Ext_{\La}^*(M,N)$ and $\Ext_{\La}^*(N,M)$ are finitely generated
(cf.\ \cite{Schulz1} and \cite{Schulz2} for a discussion of these
phenomena). As an example, consider the quantum exterior algebra
$$k \langle x,y \rangle / (x^2, xy-qyx, y^2),$$
where the element $q$ is a nonzero non-root of unity in $k$. Let $M$
be a two dimensional vector space with basis $\{ u,v \}$, say. By
defining
$$xu=0, \hspace{.3cm} xv=0, \hspace{.3cm} yu=v, \hspace{.3cm} yv=0,
\hspace{.3cm}$$ this vector space becomes a module over the quantum
exterior algebra. Moreover, it is not difficult to see that this
module is periodic of period one (cf.\ \cite[Example 4.5]{Bergh1}).
However, by \cite{ErdmannSolberg} and \cite[Theorem
5.5]{BerghOppermann} the algebra does not satisfy {\bf{Fg}}, since
$q$ is not a root of unity.
\end{example}

We now return to the general theory. A natural question to ask is
how big the relative support variety of a module is. For an
arbitrary module $N$, this cannot be answered unless we introduce
finiteness conditions, since a priori there is no relationship
between $H$ and $\Ext_{\La}^*(M,N)$ or $\Ext_{\La}^*(N,M)$.
However, when we introduce finite generation, the situation
becomes much more manageable.

Let $X$ be a $\La$-module with minimal projective and injective
resolutions
$$\cdots \to P_2 \to P_1 \to P_0 \to X \to 0,$$
$$0 \to X \to I^0 \to I^1 \to I^2 \to \cdots,$$
say. Then we define the \emph{complexity} and \emph{plexity} of $X$,
denoted $\cx X$ and $\px X$, respectively, as
\begin{eqnarray*}
\cx X & \stackrel{\text{def}}{=} & \inf \{ t \in \mathbb{N} \cup
\{ 0 \} \mid \exists a \in \mathbb{R} \text{ such that } \dim_k
P_n
\leq an^{t-1} \text{ for } n \gg 0 \}, \\
\px X & \stackrel{\text{def}}{=} & \inf \{ t \in \mathbb{N} \cup
\{ 0 \} \mid \exists a \in \mathbb{R} \text{ such that } \dim_k
I^n \leq an^{t-1} \text{ for } n \gg 0 \}.
\end{eqnarray*}
The complexity and the plexity of a module are not necessarily
finite. Also, from the definition we see that $\cx X =0$
(respectively, $\px X =0$) if and only if $X$ has finite
projective dimension (respectively, finite injective dimension).
It is well known that the complexity of $X$ equals $\gamma \left (
\Ext_{\La}^*(X, \La / \rad \La ) \right )$, whereas its plexity
equals $\gamma \left ( \Ext_{\La}^*( \La / \rad \La, X ) \right
)$. Generalizing this, we define the \emph{complexity of the pair}
($X,Y$) of $\La$-modules to be $\gamma \left ( \Ext_{\La}^*(X,Y)
\right )$, and denote it by $\cx (X,Y)$. Thus $\cx X$ is the
complexity of the pair $(X, \La / \rad \La )$, whereas $\px X$ is
the complexity of the pair $( \La / \rad \La, X )$. Note that $\cx
(X,Y) \neq \cx (Y,X)$ in general, that is, the order matters.
Also, it follows from the discussion prior to \cite[Proposition
5.3.5]{Benson} that $\cx (M,N) \le \cx M$, and similarly $\cx
(M,N) \le \px N$. In particular $\cx (M,M)$ is at most $\cx M$ and
$\px M$, and the following result shows that equality occurs when
finite generation holds.

\begin{proposition}\cite[Proposition 5.3.5]{Benson}\label{cx}
If the $H$-module $\Ext_{\La}^*(M, \La / \rad \La )$ is finitely
generated, then
$$\cx M = \cx (M,M) = \gamma (H).$$
Similarly, if the $H$-module $\Ext_{\La}^*( \La / \rad \La, M )$ is
finitely generated , then
$$\px M = \cx (M,M) = \gamma (H).$$
In particular, if both $\Ext_{\La}^*(M, \La / \rad \La )$ and
$\Ext_{\La}^*( \La / \rad \La, M )$ are finitely generated over $H$,
then $\cx M = \px M$.
\end{proposition}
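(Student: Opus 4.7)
My plan is to reduce everything to a single growth-rate lemma for finitely generated graded modules over $H$, and then squeeze the relevant invariants between two equal quantities using the cited inequality $\cx (M,N) \leq \cx M$.

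First I would observe that by Proposition \ref{finite}, finite generation of $\Ext_{\La}^*(M, \La / \rad \La)$ over $H$ propagates to finite generation of $\Ext_{\La}^*(M,N)$ for every $N \in \mod \La$; in particular, taking $N = M$, the ring $\Ext_{\La}^*(M,M)$ itself is a finitely generated $H$-module. (In the dual case one uses the equivalence (iii) $\Leftrightarrow$ (iv) of that proposition instead.) The key auxiliary statement I would then establish is: \emph{if $X$ is a finitely generated graded $H$-module, then $\gamma(X) \leq \gamma(H)$}. To prove this, invoke Proposition \ref{normalization} to choose a polynomial subring $k[x_1, \dots, x_c] \subseteq H$, with $c = \gamma(H)$, over which $H$ and $X$ are both finitely generated. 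A standard Hilbert-series computation over the polynomial ring then forces $\dim_k X_n$ to grow at most polynomially of degree $c-1$, giving $\gamma(X) \leq c = \gamma(H)$.

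Applied to $X = \Ext_{\La}^*(M,M)$, the lemma yields $\cx (M,M) = \gamma(\Ext_{\La}^*(M,M)) \leq \gamma(H)$. Conversely, since $H$ embeds as a graded subalgebra of $\Ext_{\La}^*(M,M)$, one has $\gamma(H) \leq \gamma(\Ext_{\La}^*(M,M)) = \cx (M,M)$, so $\cx (M,M) = \gamma(H)$. To identify this with $\cx M$, I would use the standard fact that $\dim_k \Ext_{\La}^n(M, \La/\rad \La)$ is bounded above and below by constant multiples of the number of indecomposable summands of the $n$-th term $P_n$ of the minimal projective resolution of $M$, which is itself comparable to $\dim_k P_n$; this gives $\cx M = \gamma(\Ext_{\La}^*(M, \La/\rad \La))$. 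Applying the auxiliary lemma to $X = \Ext_{\La}^*(M, \La/\rad \La)$ then yields $\cx M \leq \gamma(H) = \cx (M,M)$, and combining this with the known inequality $\cx (M,M) \leq \cx M$ quoted from \cite[Proposition 5.3.5]{Benson} produces the chain of equalities $\cx M = \cx (M,M) = \gamma(H)$.

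The statement about plexity is proved in exactly the same way, replacing $\Ext_{\La}^*(M, \La/\rad \La)$ by $\Ext_{\La}^*(\La/\rad \La, M)$, the complexity $\cx M$ by $\px M$, and using the minimal injective resolution in place of the projective one. The final assertion $\cx M = \px M$ under both finite-generation hypotheses is then immediate, since both equal $\gamma(H)$. The only real obstacle is the auxiliary growth-rate lemma, but Proposition \ref{normalization} reduces it to a textbook Hilbert-series estimate over a polynomial ring, so it is not a serious difficulty.
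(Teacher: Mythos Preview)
The paper does not supply its own proof of this proposition: it is stated with the attribution \cite[Proposition 5.3.5]{Benson} and no argument is given beyond the sentence preceding it, which records the inequality $\cx(M,M)\le\cx M$ (and $\cx(M,M)\le\px M$) and says that equality holds under finite generation. So there is nothing to compare your proposal against on the paper's side except that citation.

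Your argument is correct and is precisely the standard route one would expect Benson's result to follow. The two ingredients---that a finitely generated graded module over $H$ has growth rate at most $\gamma(H)$ (via Noether normalization and a Hilbert-series bound), and that $H$ sits inside $\Ext_{\La}^*(M,M)$ so the reverse inequality is automatic---combine exactly as you describe, and the squeeze against $\cx(M,M)\le\cx M$ closes the loop. One small remark: you phrase the auxiliary growth lemma for an arbitrary finitely generated graded $H$-module $X$, whereas Proposition~\ref{normalization} is stated only for $X$ of the form $\Ext_{\La}^*(M,N)$ or $\Ext_{\La}^*(N,M)$. This causes no trouble, since in your two applications $X$ is always such an Ext-module; but if you want the lemma in the generality you state it, you should appeal directly to Noether normalization for $H$ rather than to Proposition~\ref{normalization} as written.
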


As for the ``size" of the relative support varieties, the following
result shows that it is given in terms of the complexity, provided
finite generation holds.

\begin{proposition}\label{dim}
If the $H$-module $\Ext_{\La}^*(M,N)$ is finitely generated, then
$\dim \V_H^i (N) = \cx (M,N)$. Similarly, if the $H$-module
$\Ext_{\La}^*(N,M)$ is finitely generated , then $\dim \V_H^p (N) =
\cx (N,M)$.
\end{proposition}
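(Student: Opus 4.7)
The plan is to reduce the computation of $\dim \V_H^i(N)$ to a standard Hilbert-series calculation. The statement really says that for a finitely generated graded module over a Noetherian graded algebra, the Krull dimension of the support equals the rate of growth of the Hilbert function. Both sides of the claimed equality can be interpreted in this framework.

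First I would invoke Proposition \ref{normalization} to produce a polynomial subring $A = k[x_1, \dots, x_c] \subseteq H$ with $c = \gamma(H)$ such that both $H$ and $\Ext_{\La}^*(M,N)$ are finitely generated as $A$-modules. In particular, $H$ is finitely generated as a $k$-algebra (generators being those of $A$ together with a finite set of $A$-module generators of $H$), so $H/\Ann_H^i N$ is an affine graded $k$-algebra. This lets me identify the geometric dimension with Krull dimension: $\dim \V_H^i(N) = \dim \left( H / \Ann_H^i N \right)$.

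Next I would relate this Krull dimension to the complexity. Let $V := \Ext_{\La}^*(M,N)$, so that $\Ann_H^i N = \Ann_H V$. Since $V$ is a finitely generated graded module over the affine graded $k$-algebra $H$, standard dimension theory (e.g.\ the Hilbert--Serre theorem, cf.\ the proof of \cite[Proposition 5.3.5]{Benson}) gives
\[
\dim \left( H / \Ann_H V \right) \; = \; \gamma (V),
\]
because both sides equal one plus the degree of the Hilbert polynomial of $V$. By the definition of the complexity of a pair, $\gamma(V) = \cx(M,N)$, yielding $\dim \V_H^i(N) = \cx(M,N)$.

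The projective case is handled by exactly the same argument applied to the left $H$-module $\Ext_{\La}^*(N,M)$. The only real point that needs care is the Hilbert-polynomial input, which is where the assumption that $V$ is finitely generated over a Noetherian graded $k$-algebra is essential; once that is in hand, the equality of rate of growth and Krull dimension of the annihilator quotient is a formal consequence and the proof is short.
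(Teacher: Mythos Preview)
Your proposal is correct and follows essentially the same approach as the paper's proof: both identify $\dim \V_H^i(N)$ with the Krull dimension (equivalently, the rate of growth) of $H/\Ann_H^i N$, and then use that for a finitely generated graded module $V$ over a Noetherian graded $k$-algebra one has $\gamma(H/\Ann_H V) = \gamma(V)$. The paper's proof is simply the terse version of yours, stating $\gamma(H/\Ann_H^i N) = \gamma(\Ext_{\La}^*(M,N))$ directly and concluding ``by definition''; your invocation of Proposition~\ref{normalization} and the Hilbert--Serre theorem just makes explicit the commutative algebra that underlies this equality.
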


\begin{proof}
If $\Ext_{\La}^*(M,N)$ is finitely generated over $H$, then $\gamma
\left ( H / \Ann_H^i N \right ) = \gamma \left ( \Ext_{\La}^*(M,N)
\right )$, and so by definition $\dim \V_H^i (N) = \cx (M,N)$. The
other equality is proved similarly.
\end{proof}

\section{Wild algebras and complexity}

In this section we assume that our field $k$ is algebraically
closed. Recall that $\La$ is of \emph{finite representation type} if
there are only finitely many non-isomorphic indecomposable
$\La$-modules. Furthermore, recall that $\La$ is of \emph{tame
representation type} if there exist infinitely many non-isomorphic
indecomposable $\La$-modules, but they all belong to one-parameter
families, and in each dimension there are finitely many such
families. Finally, the algebra $\La$ is of \emph{wild representation
type} if it is not of finite or tame type.

In \cite{Crawley}, Crawley-Boevey established a link between the
representation type of a selfinjective finite dimensional algebra
and the complexities of its modules. Namely, it was shown that for
such an algebra, in any dimension only finitely many indecomposable
modules are not of complexity one. Using this, Farnsteiner showed in
\cite{Farnsteiner} that the complexity of every module of a tame
block of a finite group scheme is at most two.

Suppose our algebra $\La$ is selfinjective and satisfies the
``global" finite generation hypothesis {\bf{Fg}} defined immediately
after Proposition \ref{finite}. That is, suppose there exists a
commutative Noetherian graded $k$-algebra $R =
\bigoplus_{i=0}^{\infty} R_i$ of finite type satisfying the
following:
\begin{enumerate}
\item[(i)] For every $X \in \mod \La$ there is a graded ring homomorphism
$$\phi_X \colon R \to \Ext_{\La}^*(X,X).$$
\item[(ii)] For each pair $(X,Y)$ of finitely generated
$\La$-modules, the scalar actions from $R$ on $\Ext_{\La}^*(X,Y)$
via $\phi_X$ and $\phi_Y$ coincide, and $\Ext_{\La}^*(X,Y)$ is a
finitely generated $R$-module.
\end{enumerate}
Then Farnsteiner's proof still applies, hence $\La$ is wild if there
exists a module of complexity at least three. We end this paper with
the following result, which generalizes this.

\begin{theorem}\label{cxwild1}
Suppose $\La$ is selfinjective, and there exists a $\La$-module $M$
satisfying the following:
\begin{itemize}
\item[(i)] $\cx (M,M) \ge 3$,
\item[(ii)] there exists a commutative
Noetherian graded subalgebra $H \subseteq \Ext_{\La}^*(M,M)$ over
which $\Ext_{\La}^*(M,M)$ is a finitely generated module.
\end{itemize}
Then $\La$ is of wild representation type.
\end{theorem}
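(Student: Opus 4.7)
The plan is to argue by contradiction in the spirit of Farnsteiner \cite{Farnsteiner}, using Crawley-Boevey's theorem \cite{Crawley}: if a selfinjective finite dimensional algebra is not wild, then in each fixed dimension all but finitely many indecomposable modules have complexity one. Assuming $\La$ is of finite or tame representation type, it therefore suffices to produce infinitely many pairwise non-isomorphic indecomposable $\La$-modules of bounded $k$-dimension and complexity at least two.

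Combining Proposition \ref{elementary}(i) with Proposition \ref{dim} applied to $N = M$, we have $\dim \MaxSpec H = \cx (M, M) \ge 3$. Proposition \ref{normalization} then furnishes a polynomial subalgebra $A = k[x_1, \dots, x_c] \subseteq H$ with $c = \gamma (H) \ge 3$; after replacing the $x_i$ by suitable powers we may assume they lie in a common positive degree $n$. For each nonzero homogeneous $\zeta \in A_n$, lift $\zeta$ to a $\La$-homomorphism $\tilde{\zeta} \colon \Omega_{\La}^n M \to M$ and form the Carlson module $L_\zeta = \Ker \tilde{\zeta}$. Applying $\Hom_\La(M, -)$ to the defining sequence
$$0 \to L_\zeta \to \Omega_{\La}^n M \to M \to 0,$$
together with the dimension-shift isomorphism $\Ext_\La^i(M, \Omega_\La^n M) \cong \Ext_\La^{i-n}(M, M)$ in sufficiently high degrees, identifies $\Ext_\La^*(M, L_\zeta)$ asymptotically with the graded quotient $\Ext_\La^*(M, M) / \zeta \cdot \Ext_\La^*(M, M)$. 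Standard commutative algebra (a regular element drops Krull dimension by one) then yields, for generic $\zeta$,
$$\cx (M, L_\zeta) = c - 1 \ge 2.$$
Each $L_\zeta$ has $k$-dimension bounded by $\dim_k \Omega_{\La}^n M$ independently of $\zeta$, and the inequality $\cx (M, N) \le \px N$ combined with selfinjectivity of $\La$ gives $\cx L_\zeta \ge \cx (M, L_\zeta) \ge 2$.

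To finish, choose three $k$-linearly independent elements $\zeta_1, \zeta_2, \zeta_3 \in A_n$ forming part of a regular sequence, and consider the family $\{ L_{\zeta_1 + s \zeta_2 + t \zeta_3} \}_{(s, t) \in k^2}$. By the calculation above, the top-dimensional component of each $\V_H^i (L_\zeta)$ is the preimage of the hyperplane $\V(\zeta) \subseteq \MaxSpec A$ along the finite morphism $\MaxSpec H \to \MaxSpec A$ induced by $A \subseteq H$; distinct lines in $k \zeta_1 + k \zeta_2 + k \zeta_3$ then give pairwise distinct top components of dimension $c - 1 \ge 2$. By Proposition \ref{elementary}(v) these top components must be witnessed by indecomposable summands of complexity at least two (complexity-$1$ summands contributing only $1$-dimensional pieces to the variety), of which Crawley-Boevey's theorem under tameness admits only countably many isomorphism classes of dimension $\le \dim_k \Omega_\La^n M$. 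Matching this countable set against the uncountable parameter space $k^2$ (after, if necessary, base change to an uncountable algebraically closed extension of $k$, which preserves selfinjectivity, tameness, and the finite generation hypotheses) produces the required contradiction.

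The main obstacle is the last step: one must verify that distinct top-dimensional supports $\V_H^i (L_\zeta)$ force genuinely distinct indecomposable complexity-$\ge 2$ summands, rather than merely distinct multiplicities of a shared finite pool of summands. This is where the strict inequality $c - 1 \ge 2$ --- strictly larger than the maximum possible variety dimension of a complexity-$1$ module --- is used in an essential way; combined with Proposition \ref{elementary}(v) it forces the distinct hyperplanes $\V(\zeta)$ to be contributed by distinct configurations of complexity-$\ge 2$ indecomposable summands, closing the counting argument against tameness.
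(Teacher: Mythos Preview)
Your strategy matches the paper's: build Carlson-type modules from homogeneous elements of a polynomial subring, extract indecomposable summands of complexity $\ge 2$ and bounded dimension, then contradict Crawley-Boevey. Two points deserve attention.

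First, a minor but genuine gap: you form $0 \to L_\zeta \to \Omega^n_\La M \to M \to 0$ with $L_\zeta = \Ker\tilde\zeta$, but the chosen lift $\tilde\zeta\colon \Omega^n_\La M \to M$ of $\zeta$ need not be surjective. The paper sidesteps this by using the pushout construction, obtaining instead $0 \to M \to K_\alpha \to \Omega^{d-1}_\La M \to 0$, which is automatically short exact. Your version can be repaired by replacing $\Omega^n_\La M$ with $\Omega^n_\La M \oplus P$ for $P$ a projective cover of $M$.

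Second, the ``main obstacle'' you flag---extracting pairwise non-isomorphic indecomposable summands---is resolved in the paper by a single observation you omit: working directly over the polynomial ring $R=k[x_1,\dots,x_c]$, the ideal $(x_1+\alpha x_2)$ is prime, so the hyperplane $\V_R(x_1+\alpha x_2)$ is \emph{irreducible}. Since $\V_R^p(K_\alpha)$ equals this hyperplane and also equals the union $\bigcup_i \V_R^p(K_\alpha^i)$ over indecomposable summands, primality forces some single summand $K_\alpha^1$ to satisfy $\V_R^p(K_\alpha^1)=\V_R(x_1+\alpha x_2)$. Distinct $\alpha$ give distinct hyperplanes and hence non-isomorphic $K_\alpha^1$, each of complexity at least $c-1 \ge 2$. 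This replaces your configuration-counting and base-change argument entirely. Note also that Crawley-Boevey's theorem yields \emph{finitely} many (not merely countably many) complexity-$\ge 2$ indecomposables in each bounded dimension, so a one-parameter family over the infinite field $k$ already suffices; the two-parameter family and the passage to an uncountable extension are both unnecessary.
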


\begin{proof}
Suppose (i) holds. By Proposition \ref{normalization}, we may assume
that $H$ is a polynomial ring, say $H = k[y_1, \dots, y_n]$, where
$n \ge 3$. Denote the ideal $\Ann_H^i N \subseteq H$ by $\az$. Since
$\Ext_{\La}^*(M,M)$ is a finitely generated $H / \az$-module, we may
apply the Noether normalization lemma and obtain a new polynomial
ring $R = k[x_1, \dots, x_c] \subseteq H/ \az$, with $c = \cx
(M,M)$, over which $\Ext_{\La}^*(M,M)$ is finitely generated.
Moreover, we may assume that the homogeneous elements $x_1, \dots,
x_c$ are of the same degree, say $| x_i | =d$.

The maximal ideals in $R$ correspond to points $( \alpha_1, \dots,
\alpha_c ) \in k^c$. Given an ideal $I \subseteq R$, we denote its
variety by $\V_R (I)$, thus
$$\V_R(I) = \{ \alpha \in k^c \mid f ( \alpha )=0 \text{ for all
} f \in I \}.$$ Now for each $\alpha \in k$, denote the element
$x_1+ \alpha x_2 \in R$ by $x_{\alpha}$. Lifting this element to $H$
gives a homogeneous element $\eta_{\alpha} \in \Ext_{\La}^*(M,M)$ of
degree $d$, from which we obtain a short exact sequence
\begin{equation*}\label{ES}
0 \to M \to K_{\alpha} \to \Omega_{\La}^{d-1}(M) \to 0.
\tag{$\dagger$}
\end{equation*}
By applying the same proof as in \cite[Proposition 4.3]{Erdmann}, we
see that $\V_R^p (K_{\alpha}) = \V_R ( x_1+ \alpha x_2 )$. Thus $\cx
(K_{\alpha},M) =c-1$, and $K_{\alpha}$ is not isomorphic to
$K_{\alpha'}$ whenever $\alpha \neq \alpha'$.

For each $\alpha$, let $K_{\alpha} = K_{\alpha}^1 \oplus \cdots
\oplus K_{\alpha}^{t_{\alpha}}$ be a decomposition of $K_{\alpha}$
into indecomposable $\La$-modules. Moreover, denote the ideal
$\Ann_R^p K_{\alpha}^i \subseteq R$, that is, the ideal $\Ann_R
\Ext_{\La}^* ( K_{\alpha}^i, M )$, by $\az_i$. Then $\V_R^p (
K_{\alpha}^i )$ is by definition the variety $\V_R ( \az_i )$, and
therefore
$$\V_R ( x_1+ \alpha x_2 ) = \V_R^p (K_{\alpha}) =
\bigcup_{i=1}^{t_{\alpha}} \V_R^p ( K_{\alpha}^i ) =
\bigcup_{i=1}^{t_{\alpha}} \V_R ( \az_i ) =  V_R (
\prod_{i=1}^{t_{\alpha}} \az_i ),$$ which in turn implies
$$\prod_{i=1}^{t_{\alpha}} \az_i \subseteq \sqrt{\prod_{i=1}^{t_{\alpha}}
\az_i} = \sqrt{(x_1+ \alpha x_2 )}.$$ Since the ideal $(x_1+ \alpha
x_2 )$ is prime, it is equal to its own radical, and contains one of
the ideals $\az_1, \dots, \az_{t_{\alpha}}$, say $\az_1$. However,
the variety $\V_R^p ( K_{\alpha}^1 )$ is contained in $\V_R^p (
K_{\alpha} )$, and therefore $(x_1+ \alpha x_2 ) = \sqrt{\az_1}$.
Consequently $\sqrt{\az_1} = (x_1+ \alpha x_2 )$, and this shows
that the varieties $\V_R^p ( K_{\alpha}^1 )$ and $\V_R^p (
K_{\alpha} )$ are equal.

The indecomposable $\La$-modules $\{ K_{\alpha}^1 \}_{\alpha \in k}$
are pairwise nonisomorphic, and from the exact sequence (\ref{ES})
we see that $\dim_k K_{\alpha}^1 \le \dim_k M + \dim_k
\Omega_{\La}^{d-1}(M)$ for every $\alpha \in k$. Moreover, by
construction we know that $\cx (K_{\alpha}^1,M) = \cx
(K_{\alpha},M)$, hence
$$2 \le c-1 = \cx (K_{\alpha}^1,M) \le \cx K_{\alpha}^1.$$
The result now follows from Crawley-Boevey's result \cite[Theorem
D]{Crawley}.
\end{proof}

Using Proposition \ref{finite} and Proposition \ref{cx}, we obtain
the following corollaries to Theorem \ref{cxwild1}.

\begin{corollary}\label{cxwild2}
Suppose $\La$ is selfinjective, and there exists a $\La$-module $M$
satisfying the following:
\begin{itemize}
\item[(i)] $\cx M \ge 3$,
\item[(ii)] there exists a commutative Noetherian graded subalgebra $H
\subseteq \Ext_{\La}^*(M,M)$ over which $\Ext_{\La}^*(M, \La / \rad
\La)$ is a finitely generated module.
\end{itemize}
Then $\La$ is of wild representation type.
\end{corollary}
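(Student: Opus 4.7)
The plan is to reduce Corollary \ref{cxwild2} directly to Theorem \ref{cxwild1} by verifying its two hypotheses, using the tools developed earlier in the paper.

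First, I would invoke Proposition \ref{finite}, specifically the implication (ii) $\Rightarrow$ (i) in the injective case. The hypothesis of Corollary \ref{cxwild2} states that $\Ext_{\La}^*(M, \La / \rad \La)$ is a finitely generated $H$-module, which is exactly condition (ii) of that proposition. Applying it with $N = M$ then yields that $\Ext_{\La}^*(M,M)$ is a finitely generated $H$-module. Combined with the hypothesis that $H$ is commutative, Noetherian, and graded, this supplies hypothesis (ii) of Theorem \ref{cxwild1}.

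Next, I would use Proposition \ref{cx} to pass from the complexity $\cx M$ in the hypothesis to the pair complexity $\cx(M,M)$ appearing in Theorem \ref{cxwild1}. Since $\Ext_{\La}^*(M, \La / \rad \La)$ is finitely generated over $H$, Proposition \ref{cx} gives the equality $\cx M = \cx(M,M) = \gamma(H)$. The assumption $\cx M \ge 3$ therefore forces $\cx(M,M) \ge 3$, which is hypothesis (i) of Theorem \ref{cxwild1}.

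With both hypotheses of Theorem \ref{cxwild1} verified for the same module $M$ and subalgebra $H$, an application of that theorem immediately concludes that $\La$ is of wild representation type. There is no substantive obstacle here; the work has been done in establishing Proposition \ref{finite}, Proposition \ref{cx}, and Theorem \ref{cxwild1}, and Corollary \ref{cxwild2} is essentially a repackaging of Theorem \ref{cxwild1} in which the pair complexity $\cx(M,M)$ is replaced by the (a priori possibly larger, but here equal) quantity $\cx M$, and the finite generation hypothesis on $\Ext_{\La}^*(M,M)$ is replaced by the often more checkable hypothesis on $\Ext_{\La}^*(M, \La/\rad \La)$.
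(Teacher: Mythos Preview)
Your proposal is correct and follows exactly the approach indicated in the paper: use Proposition \ref{finite} to obtain finite generation of $\Ext_{\La}^*(M,M)$ over $H$, use Proposition \ref{cx} to convert $\cx M \ge 3$ into $\cx(M,M) \ge 3$, and then apply Theorem \ref{cxwild1}.
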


\begin{corollary}\label{cxwild3}
Suppose $\La$ is selfinjective, and there exists a $\La$-module $M$
satisfying the following:
\begin{itemize}
\item[(i)] $\px M \ge 3$,
\item[(ii)] there exists a commutative Noetherian graded subalgebra $H
\subseteq \Ext_{\La}^*(M,M)$ over which $\Ext_{\La}^*( \La / \rad
\La, M)$ is a finitely generated module.
\end{itemize}
Then $\La$ is of wild representation type.
\end{corollary}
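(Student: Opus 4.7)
The plan is to deduce Corollary \ref{cxwild3} directly from Theorem \ref{cxwild1} by showing that its two hypotheses force the two hypotheses of the theorem. So I need to extract from the assumption ``$\Ext_{\La}^*(\La/\rad\La, M)$ is finitely generated over $H$'' both (a) that $\cx(M,M) \ge 3$, and (b) that $\Ext_{\La}^*(M,M)$ itself is finitely generated over $H$.

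For step (a), I would invoke Proposition \ref{cx}: finite generation of $\Ext_{\La}^*(\La/\rad\La, M)$ over $H$ yields $\px M = \cx(M,M) = \gamma(H)$. Combined with hypothesis (i) of the corollary ($\px M \ge 3$), this gives $\cx(M,M) \ge 3$, which is exactly condition (i) of Theorem \ref{cxwild1}.

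For step (b), I would apply the equivalence (iv)~$\Leftrightarrow$~(iii) of Proposition \ref{finite}: finite generation of $\Ext_{\La}^*(\La/\rad\La, M)$ over $H$ is equivalent to finite generation of $\Ext_{\La}^*(N,M)$ over $H$ for \emph{every} $N \in \mod\La$. Specializing to $N = M$ yields that $\Ext_{\La}^*(M,M)$ is a finitely generated $H$-module, which is condition (ii) of Theorem \ref{cxwild1}.

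With both hypotheses of Theorem \ref{cxwild1} verified, the theorem gives that $\La$ is of wild representation type. There is no real obstacle here; the corollary is essentially a bookkeeping consequence of Theorem \ref{cxwild1}, Proposition \ref{finite}, and Proposition \ref{cx}, and the proof should fit in just a few lines.
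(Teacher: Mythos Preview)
Your proposal is correct and follows exactly the approach indicated in the paper, which states that the corollary is obtained from Theorem~\ref{cxwild1} via Proposition~\ref{finite} and Proposition~\ref{cx}. Your two steps (a) and (b) are precisely the applications of these two propositions needed to verify the hypotheses of Theorem~\ref{cxwild1}.
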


Finally, by applying Proposition \ref{choice} we obtain the
following corollary to Theorem \ref{cxwild1}. It shows that an
algebra is wild if it possesses a module whose $\Ext$-algebra is
``big enough", Noetherian and finitely generated over its center.

\begin{corollary}\label{cxwild3}
Suppose $\La$ is selfinjective, and there exists a $\La$-module $M$
satisfying the following:
\begin{itemize}
\item[(i)] $\cx (M,M) \ge 3$,
\item[(ii)] $\Ext_{\La}^*(M,M)$ is a Noetherian ring and finitely
generated as a module over its center.
\end{itemize}
Then $\La$ is of wild representation type.
\end{corollary}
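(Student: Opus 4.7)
My plan is to derive the corollary immediately from Theorem \ref{cxwild1} by choosing $H$ to be the center $\Z(M)$ of $\Ext_{\La}^*(M,M)$, with Proposition \ref{choice} providing the bridge between the hypothesis about the center and the hypothesis needed in Theorem \ref{cxwild1}.

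First I would recall that, as observed in the discussion preceding Proposition \ref{choice}, the center $\Z(M)$ of $\Ext_{\La}^*(M,M)$ is a graded subalgebra, and it is of course commutative. Thus $\Z(M)$ is a legitimate candidate for the subalgebra $H$ in Theorem \ref{cxwild1}. The second hypothesis of the corollary, namely that $\Ext_{\La}^*(M,M)$ is a Noetherian ring and a finitely generated module over $\Z(M)$, is exactly condition (iii) of Proposition \ref{choice}. Applying the implication (iii) $\Rightarrow$ (ii) of that proposition yields that $\Z(M)$ itself is Noetherian and that $\Ext_{\La}^*(M,M)$ is a finitely generated $\Z(M)$-module.

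Setting $H := \Z(M)$, we have now verified hypothesis (ii) of Theorem \ref{cxwild1}: $H$ is a commutative Noetherian graded subalgebra of $\Ext_{\La}^*(M,M)$ over which $\Ext_{\La}^*(M,M)$ is finitely generated. Hypothesis (i) of Theorem \ref{cxwild1}, that $\cx(M,M) \ge 3$, is identical to hypothesis (i) of the corollary and so holds by assumption. Theorem \ref{cxwild1} then directly gives that $\La$ is of wild representation type.

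There is essentially no obstacle here; the corollary is a direct reformulation made possible by Proposition \ref{choice}, whose role is precisely to allow us to replace the \emph{a priori} unknown subalgebra $H$ by the intrinsic object $\Z(M)$. The only point worth emphasizing in writing is that Proposition \ref{choice} is stated so that the ``finitely generated over center'' hypothesis self-upgrades to ``center is Noetherian and $\Ext$ is finitely generated over it,'' which is exactly the structural input that Theorem \ref{cxwild1} demands.
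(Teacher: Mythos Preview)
Your proposal is correct and matches the paper's approach exactly: the paper derives this corollary from Theorem~\ref{cxwild1} by applying Proposition~\ref{choice}, which is precisely what you do by taking $H = \Z(M)$. There is nothing to add.
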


We end this paper with the following example illustrating Theorem
\ref{cxwild1}, an example in which the algebra does not satisfy the
finite generation hypothesis {\bf{Fg}}.

\begin{example}
Let $q$ be a nonzero non-root of unity in $k$, and denote by
$\Gamma$ the quantum exterior algebra
$$k \langle x,y \rangle / (x^2, xy-qyx, y^2).$$
Let $X$ be the $\Gamma$-module from the example following
Proposition \ref{isovar}, i.e.\ $X$ is a two dimensional vector
space with basis $\{ u,v \}$, say, and with scalar multiplication
defined by
$$xu=0, \hspace{.3cm} xv=0, \hspace{.3cm} yu=v, \hspace{.3cm} yv=0.
\hspace{.3cm}$$ This module is periodic of period one, and so if we
denote its projective cover
$$0 \to X \to P \to X \to 0$$
by $\mu$, the $\Ext$-algebra $\Ext_{\Gamma}^*(X,X)$ is finitely
generated as a module over the polynomial subalgebra $k[ \mu ]$. Now
let $\La$ be the algebra $\Gamma \otimes_k \Gamma \otimes_k \Gamma$,
and let $M$ be the $\La$-module $X \otimes_k X \otimes_k X$. Then
the $\Ext$-algebra of $M$ is given by
$$\Ext_{\La}^*(M,M) = \Ext_{\Gamma}^*(X,X) \overline{\otimes}_k
\Ext_{\Gamma}^*(X,X) \overline{\otimes}_k \Ext_{\Gamma}^*(X,X),$$
where $\overline{\otimes}$ differ from the usual tensor product only
in that elements of odd degree anticommute (cf.\ \cite[Chapter
XI]{Cartan}). Therefore $\Ext_{\La}^*(M,M)$ is finitely generated as
a module over a commutative Noetherian graded subalgebra. Moreover,
since $\cx_{\Gamma} (X,X) =1$, we see that $\cx_{\La} (M,M)=3$.
Finally, the algebra $\La$ does not satisfy {\bf{Fg}}. Namely, this
algebra is a quantum exterior algebra on six generators, where some
of the defining commutators equal $q$. Then by \cite{ErdmannSolberg}
and \cite[Theorem 5.5]{BerghOppermann} $\La$ does not satisfy
{\bf{Fg}}, since $q$ is not a root of unity.
\end{example}

\end{document}